\theoremstyle{plain}
 \newtheorem{thm}{Theorem}[section]
 \newtheorem{cor}{Corollary}[section]
\theoremstyle{definition}
\theoremstyle{remark}
 \newtheorem{rem}{Remark}[section] 
 \numberwithin{equation}{section}
\renewcommand{\leq}{\leqslant}
\renewcommand{\geq}{\geqslant}
\renewcommand{\setminus}{\smallsetminus}
\title[Lagrange-type operators associated with $U_n^{\varrho}$]{Lagrange-type operators associated with $U_n^{\varrho}$}
\subjclass[2010]{Primary: 41A36; Secondary: 41A05}
\keywords{Bernstein operators,  genuine Bernstein-Durrmeyer operators, P\u alt\u anea operators, Lagrange interpolation, eigenstructure, iterated Boolean sum, representation of derivatives.}
\author[H. Gonska]{\bfseries Heiner Gonska}
\address{%
University of Duisburg-Essen\\
Faculty of Mathematics\\
Forsthausweg 2\\
47057 Duisburg\\
Germany}
\email{heiner.gonska@uni-due.de}
\author[I. Ra\c sa]{Ioan Ra\c sa}
\address{Technical University of Cluj-Napoca\\
Department of Mathematics\\
Str. Memorandumului nr. 28\\
RO-400114 Cluj-Napoca\\
Romania}
\email{Ioan.Rasa@math.utcluj.ro}
\author[E.D. St\u anil\u a]{Elena-Dorina St\u anil\u a}
\address{%
University of Duisburg-Essen\\
Faculty of Mathematics\\
Forsthausweg 2\\
47057 Duisburg\\
Germany}
\email{elena.stanila@stud.uni-due.de}
\thanks{Dedicated to Professor Giuseppe Mastroianni on the occasion of his 75th birthday }
\begin{document}

\vspace{18mm}
\setcounter{page}{1}
\thispagestyle{empty}

\begin{abstract}
We consider a class of positive linear operators which, among others, constitute a link between the classical Bernstein operators and the genuine Bernstein-Durrmeyer mappings. The focus is on their relation to certain Lagrange-type interpolators associated to them, a well known feature in the theory of Bernstein operators. Considerations concerning iterated Boolean sums and the derivatives of the operator images are included. Our main tool is the eigenstructure of the members of the class.
\end{abstract}

\maketitle

\section{Introduction}  

The present note continues the authors' research on a class $U_n^{\varrho}$ of Bernstein-type operators which constitute a link between the classical Bernstein operators $B_n (\varrho\rightarrow\infty)$ and the so-called genuine Bernstein-Durrmeyer operators $U_n (\varrho=1).$
This class of operators was introduced by P\u alt\u anea in \cite{Paltanea:2007} and has since then been the subject of several papers dealing with various aspects of the matter.\\
A detailed description is given in the next section.\\
Here we present their relationship to Lagrange interpolation using the eigenstructure of the $U_n^{\varrho}$, thus extending in a natural way results known for $B_n$. The eigenstructure is also useful to describe the convergence behavior of iterated Boolean sums based on a single mapping 
$U_n^{\varrho}$, $\varrho$ and $n$ fixed. In the final Section \ref{sec_derivatives} a relationship between certain divided differences used in Section \ref{sec_dd} and the representation of the derivatives $(U_n^{\varrho})^{(j)}$ is established.

\section{Lagrange type operators associated with $U_n^{\varrho}$} \label{sec_dd}
\subsection{A first description of $L_n^{\varrho}$}
Let $\varrho>0$ and $n\geq 1$ be fixed. Consider the functionals $F_{n,k}^{\varrho}:C[0,1]\rightarrow\mathbb{R}, k=0,1,...,n$, defined by
$$
\begin{array}{l}
F_{n,0}^{\varrho}(f)=f(0), F_{n,n-1}^{\varrho}(f)=f(1)\vspace{0.15cm},\\
F_{n,k}^{\varrho}(f)=\int\limits_{0}^1\dfrac{t^{k\varrho-1}(1-t)^{(n-k)\varrho-1}}{B(k\varrho,(n-k)\varrho)}f(t)dt, \; k=1,...,n-1.
\end{array}
$$
The operator $U_n^{\varrho}:C[0,1]\rightarrow \Pi_{n}$ is given by
$$
U_n^{\varrho}(f;x):=\sum\limits_{k=0}^n F_{n,k}^{\varrho}(f)p_{n,k}(x), f\in C[0,1],
$$
where $p_{n,k}(x)= {n\choose k}x^k(1-x)^{n-k}, x\in[0,1]$.
With a slight abuse of notation consider also the operator $U_n^{\varrho}:\Pi_{n}\rightarrow \Pi_{n}$. Its eigenvalues $\lambda_{n,k}^{(n)}$ and eigenfunctions $p_{n,k}^{(n)}, \; k=0,1,...,n$, are described in \cite{Gonska-Rasa-Stanila:2013}; in particular,
$$1=\lambda_{\varrho,0}^{(n)}=\lambda_{\varrho,1}^{(n)}> \lambda_{\varrho,2}^{(n)}> \lambda_{\varrho,3}^{(n)}>... >\lambda_{\varrho,n}^{(n)}>0,$$
which means that $U_n^{\varrho}:\Pi_{n}\rightarrow \Pi_{n}$ is invertible. Consider the inverse operator $(U_n^{\varrho})^{-1}:\Pi_{n}\rightarrow \Pi_{n}$ and define $L_n^{\varrho}:C[0,1]\rightarrow \Pi_{n}$ by
\begin{equation}\label{lagrange_eq1.1}
L_n^{\varrho}=(U_n^{\varrho})^{-1}\circ U_n^{\varrho}.
\end{equation}
Then $U_n^{\varrho}(L_n^{\varrho}f)=U_n^{\varrho}(f), f\in C[0,1]$, which leads to
\begin{equation}\label{lagrange_eq1.2}
F_{n,k}^{\varrho}(L_n^{\varrho}f)=F_{n,k}^{\varrho}(f), f\in C[0,1], k=0,1,...,n.
\end{equation}
(\ref{lagrange_eq1.2}) expresses an interpolatory property with respect to the functionals \\ $F_{n,0}^{\varrho},...,F_{n,n}^{\varrho}$; more precisely, given $f\in C[0,1], L_n^{\varrho}f$ is the unique polynomial in $\Pi_n$ satisfying (\ref{lagrange_eq1.2}). In particular, $L_n p=p, \forall p\in \Pi_n$. It is known (see \cite{Gonska-Paltanea:2010-1}) that
\begin{equation}\label{lagrange_eq1.3}
\lim\limits_{\varrho\rightarrow\infty}F_{n,k}^{\varrho}(f)=f\left(\frac{k}{n}\right), f\in C[0,1], k=0,1,...,n.
\end{equation}
This entails
\begin{equation}\label{lagrange_eq1.4}
\lim\limits_{\varrho\rightarrow\infty}U_{n}^{\varrho}(f)=B_nf, \;\mbox{uniformly on}\; [0,1],
\end{equation}
for all $f\in C[0,1]$; here $B_n$ denotes the classical Bernstein operator on $C[0,1]$. Let $L_n$ be the Lagrange operator on $C[0,1]$ based on the nodes $0,\frac{1}{n}, ...,\frac{n-1}{n}, 1$. It is easy to see that
\begin{equation}\label{lagrange_eq1.41}
L_n=B_n^{-1}\circ B_n,
\end{equation}
where
$$
C[0,1]\xrightarrow{\;\;\;B_n\;\;\;\;}\Pi_n\xrightarrow{(B_n)^{-1}}\Pi_n.
$$
We will see that
\begin{equation}\label{lagrange_eq1.5}
\lim\limits_{\varrho\rightarrow\infty}L_{n}^{\varrho}(f)=L_nf, \;\mbox{uniformly on}\; [0,1],
\end{equation}
for all $f\in C[0,1]$. If we interpret (\ref{lagrange_eq1.4}) by saying that $U_n^{\infty}=B_n$, then  (\ref{lagrange_eq1.5}) can be interpreted as $L_n^{\infty}=L_n$. On the other hand, one has (see \cite[(3.3)]{Gonska-Rasa-Stanila:2013})
\begin{equation}\label{lagrange_eq1.6}
U_{n}^{\varrho}f=\sum\limits_{k=0}^n \lambda_{\varrho,k}^{(n)}p_{\varrho,k}^{(n)}\mu_{\varrho,k}^{(n)}(f), f\in C[0,1],
\end{equation}
 where $\mu_{\varrho,k}^{(n)}$ are the dual functionals of $p_{\varrho,k}^{(n)}$. This leads to
 \begin{equation*}
L_{n}^{\varrho}(f)=(U_n^{\varrho})^{-1}( U_n^{\varrho}f)=\sum\limits_{k=0}^n \lambda_{\varrho,k}^{(n)}\dfrac{1}{\lambda_{\varrho,k}^{(n)}}p_{\varrho,k}^{(n)}\mu_{\varrho,k}^{(n)}(f),
\end{equation*}
i.e.,
 \begin{equation}\label{lagrange_eq1.7}
L_{n}^{\varrho}(f)=\sum\limits_{k=0}^n p_{\varrho,k}^{(n)}\mu_{\varrho,k}^{(n)}(f), f\in C[0,1].
\end{equation}
So the relationship between $U_{n}^{\varrho}$ and $L_{n}^{\varrho}$, expressed by (\ref{lagrange_eq1.6}) and (\ref{lagrange_eq1.7}), is similar to the relationship between $B_n=U_n^{\infty}$ and $L_n=L_n^{\infty}$, described in \cite[Sect. 6]{Cooper-Waldron:2000}. \\
To conclude this section let us recall that
$$
U_n^{\varrho}=B_n\circ\overline{\mathbb{B}}_{n\varrho}
$$
where
$$\overline{\mathbb{B}}_{r}(f;x)=
\begin{cases}
f(0), x=0;\\
\dfrac{\int\limits_0^1t^{rx-1}(1-t)^{r-rx-1}f(t)dt}{B(rx,r-rx)}, 0<x<1;\\
f(1), x=1.
\end{cases}$$
is the Lupa\c s-M\" uhlbach Beta operator (see \cite[p. 63]{Lupas:1972}, \cite{Muhlbach:1972}). From (\ref{lagrange_eq1.1}) and (\ref{lagrange_eq1.41}) it follows that
\begin{equation}\label{lagrange_eq1.8}
L_{n}^{\varrho}=(\overline{\mathbb{B}}_{n\varrho})^{-1}\circ L_n\circ \overline{\mathbb{B}}_{n\varrho},\;\varrho>0,
\end{equation}
i.e., the operators $L_{n}^{\varrho}$ and $L_n$ are similar.
\subsection{A concrete approach to $L_n^{\varrho}$}
In order to obtain other representations of the operators $L_n^{\varrho}$ we shall use a classical method described, for example, in \cite[Sect. 1.2]{Popoviciu:1973}, \cite{Davis:1975}, \cite[Sect. 1.3]{Mastroianni-Milovanovic:2008}.
Let $n\geq 1, \varrho>0$ and $f\in C[0,1]$ be fixed. Then $L_n^{\varrho}f\in\Pi_n$ has the form $L_n^{\varrho}f=c_0e_0+c_1e_1+...+c_ne_n$, where $e_j(x)=x^j, x\in [0,1], j\geq 0$, and $c_j\in\mathbb{R}$. According to (\ref{lagrange_eq1.2}), the coefficients $c_0,...,c_n$ satisfy the system of equations
$$
\begin{cases}
L_n^{\varrho}f=c_0e_0+c_1e_1+...+c_ne_n\\
F_{n,0}^{\varrho}(f)=c_0F_{n,0}^{\varrho}(e_0)+c_1F_{n,0}^{\varrho}(e_1)+...+c_nF_{n,0}^{\varrho}(e_n)\\
...\\
F_{n,n}^{\varrho}(f)=c_0F_{n,n}^{\varrho}(e_0)+c_1F_{n,n}^{\varrho}(e_1)+...+c_nF_{n,n}^{\varrho}(e_n).
\end{cases}
$$
By eliminating $c_0,...,c_n$, we get
\begin{equation}\label{lagrange_eq2.1}
\left|
\begin{array}{ccccc}
L_n^{\varrho}f& e_0 &e_1 &...& e_n\\
F_{n,0}^{\varrho}(f)& F_{n,0}^{\varrho}(e_0)&F_{n,0}^{\varrho}(e_1)&...&F_{n,0}^{\varrho}(e_n)\\
...&....&...&...&...\\
F_{n,n}^{\varrho}(f)& F_{n,n}^{\varrho}(e_0)&F_{n,n}^{\varrho}(e_1)&...&F_{n,n}^{\varrho}(e_n)
\end{array}
\right|=0
\end{equation}
Since $F_{n,j}^{\varrho}(e_m)=\dfrac{(i\varrho)^{\overline{m}}}{(n\varrho)^{\overline{m}}}$,  where by $x^{\overline{k}}=x(x+1)\cdot...\cdot(x+k-1)$ we have denoted  the
rising factorial,
from (\ref{lagrange_eq2.1}) we get after elementary computations:
\begin{equation}\label{lagrange_eq2.2}
L_n^{\varrho}f=-V(0,\frac{1}{n},...,\frac{n-1}{n},1)^{-1}\left|
\begin{array}{ccccc}
0& e_0 &\dfrac{(n\varrho)^{\overline{1}}}{(n\varrho)}e_1 &...&\dfrac{(n\varrho)^{\overline{n}}}{(n\varrho)^{n}} e_n\\
F_{n,0}^{\varrho}(f)& 1&\dfrac{(0\varrho)^{\overline{1}}}{(n\varrho)}&...&\dfrac{(0\varrho)^{\overline{n}}}{(n\varrho)^{n}}\\
F_{n,1}^{\varrho}(f)& 1&\dfrac{(1\varrho)^{\overline{1}}}{(n\varrho)}&...&\dfrac{(1\varrho)^{\overline{n}}}{(n\varrho)^{n}}\\
...&....&...&...&...\\
F_{n,n}^{\varrho}(f)& 1&\dfrac{(n\varrho)^{\overline{1}}}{(n\varrho)}&...&\dfrac{(n\varrho)^{\overline{n}}}{(n\varrho)^{n}}\\
\end{array}
\right|
\end{equation}
where $V$ is the Vandermonde determinant. Now we are in the position to prove (\ref{lagrange_eq1.5})
\begin{thm}
For each $f\in C[0,1]$ we have
$$
\lim\limits_{\varrho\rightarrow\infty}L_{n}^{\varrho}f=L_n f, \;\mbox{uniformly on}\; [0,1].
$$
\end{thm}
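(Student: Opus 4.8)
The plan is to derive the convergence from the determinantal formula \eqref{lagrange_eq2.2} by a continuity argument. First I would observe that the right-hand side of \eqref{lagrange_eq2.2} is obtained by expanding the determinant along its first column; doing so writes $L_n^\varrho f$ as a linear combination
$$
L_n^\varrho f = \sum_{k=0}^n a_{n,k}^\varrho \, F_{n,k}^\varrho(f),
$$
where each coefficient $a_{n,k}^\varrho \in \Pi_n$ is (up to the fixed factor $-V(0,\tfrac1n,\dots,\tfrac{n-1}{n},1)^{-1}$) the cofactor of $F_{n,k}^\varrho(f)$, hence a polynomial whose coefficients are themselves polynomial expressions in the entries $\dfrac{(j\varrho)^{\overline{m}}}{(n\varrho)^m}$, $j,m=0,\dots,n$.

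Next I would pass to the limit $\varrho\to\infty$ in those entries: since $(j\varrho)^{\overline{m}} = (j\varrho)(j\varrho+1)\cdots(j\varrho+m-1)$, we have $\dfrac{(j\varrho)^{\overline{m}}}{(n\varrho)^m}\to \left(\dfrac{j}{n}\right)^m$ as $\varrho\to\infty$, for every fixed $j$ and $m$. Because each cofactor depends polynomially (hence continuously) on these finitely many entries, $a_{n,k}^\varrho \to a_{n,k}^\infty$ coefficientwise, and therefore uniformly on $[0,1]$, where $a_{n,k}^\infty$ is exactly the corresponding cofactor in the analogous determinant with entries $(j/n)^m$. That limiting determinant is precisely the classical one whose vanishing characterizes $L_n f$ (the Lagrange interpolant at $0,\tfrac1n,\dots,\tfrac{n-1}{n},1$); in other words $\sum_{k} a_{n,k}^\infty\, y_k = L_n$ applied to the data $(y_k)$, by exactly the same elimination argument that produced \eqref{lagrange_eq2.2}, now run in reverse. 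In particular one should check that the normalizing Vandermonde factor is the correct one, which is why the nodes $0,\tfrac1n,\dots,\tfrac{n-1}{n},1$ already appear in \eqref{lagrange_eq2.2}: the leading principal minor of the $(j/n)^m$ matrix is $V(0,\tfrac1n,\dots,\tfrac{n-1}{n},1)\neq 0$, so no degeneracy occurs in the limit.

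Finally I would combine this with \eqref{lagrange_eq1.3}, which gives $F_{n,k}^\varrho(f)\to f(k/n)$ for each $k$ and each $f\in C[0,1]$. Since the sum over $k$ is finite,
$$
L_n^\varrho f = \sum_{k=0}^n a_{n,k}^\varrho\, F_{n,k}^\varrho(f) \;\longrightarrow\; \sum_{k=0}^n a_{n,k}^\infty\, f\!\left(\tfrac{k}{n}\right) = L_n f,
$$
uniformly on $[0,1]$, because each term converges uniformly (a uniformly convergent sequence of polynomials times a convergent sequence of scalars). I expect the only real point requiring care to be the bookkeeping in the cofactor expansion — verifying that the cofactors of the $\varrho$-determinant in \eqref{lagrange_eq2.2} genuinely converge to the cofactors of the limiting $B_n$-determinant and that the latter reproduces $L_n$ with the stated Vandermonde normalization; the analytic content (entrywise convergence of rising factorials and \eqref{lagrange_eq1.3}) is routine once that algebraic identification is in place.
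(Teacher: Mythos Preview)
Your proposal is correct and follows essentially the same route as the paper: the paper also passes to the limit in the determinantal formula \eqref{lagrange_eq2.2}, using \eqref{lagrange_eq1.3} for the first column and the elementary limit $\dfrac{(j\varrho)^{\overline{k}}}{(n\varrho)^k}\to(j/n)^k$ for the remaining entries, and then recognises the resulting determinant as the classical representation of $L_n f$. The only difference is cosmetic: the paper takes the limit directly inside the determinant and cites the standard identification of the limiting determinant with $L_n f$, whereas you spell out the continuity via the cofactor expansion.
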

\begin{proof}
Let us remark that
\begin{equation}\label{lagrange_eq2.3}
\lim\limits_{\varrho\rightarrow\infty}\dfrac{(j\varrho)^{\overline{k}}}{(n\varrho)^{k}}=\left(\dfrac{j}{n}\right)^k.
\end{equation}
From (\ref{lagrange_eq1.3}), (\ref{lagrange_eq2.2}), and (\ref{lagrange_eq2.3}) we deduce
\begin{equation}\label{lagrange_eq2.4}
\lim\limits_{\varrho\rightarrow\infty}L_n^{\varrho}f=-V\left(0,\frac{1}{n},...,\frac{n-1}{n},1\right)^{-1}\left|
\begin{array}{ccccc}
0& e_0 &e_1 &...& e_n\\
f(0)& 1&0&...&0\\
f(\frac{1}{n})& 1&\frac{1}{n}&...&(\frac{1}{n})^n\\
...&....&...&...&...\\
f(\frac{n-1}{n})& 1&\frac{n-1}{n}&...&(\frac{n-1}{n})^n\\
f(1)&1&1&...&1
\end{array}
\right|.
\end{equation}
Since the right hand-side of (\ref{lagrange_eq2.4}) is $L_{n}f$ (see, e.g., \cite[Sect. 3.1]{Stancu-Agratini-Coman-Trambitas:2001},  \cite[Sect. 1.3]{Mastroianni-Milovanovic:2008}.), the proof is complete.
\end{proof}
\subsection{The associated divided difference}
The coefficient of $e_n$ in the expression of $L_nf$ is the divided difference of $f$ at the nodes $0,\frac{1}{n}, ....,\frac{n-1}{n},1$, and is given by (see e.g. \cite[Sect. 2.6]{Stancu-Agratini-Coman-Trambitas:2001}):
\begin{eqnarray}\label{lagrange_eq3.1}
\left[0,\frac{1}{n},...,\frac{n-1}{n},1;f\right]=V\left(0,\frac{1}{n},...,\frac{n-1}{n},1\right)^{-1}\times\;\;\;\;\;\;\;\;\;\;\;\;\\
\;\;\;\;\;\;\;\;\;\;\;\;\;\;\;\;\;\;\;\;\;\times\left|
\begin{array}{cccccc}
1& 0 &0 &...& 0&f(0)\\
1&\frac{1}{n}&(\frac{1}{n})^2&...&(\frac{1}{n})^{n-1}&f(\frac{1}{n})\\
...&....&...&...&...&...\\
1&\frac{n-1}{n}&(\frac{n-1}{n})^2&...&(\frac{n-1}{n})^{n-1}&f(\frac{n-1}{n})\\
1&1&1&...&1&f(1)
\end{array}
\right|.\nonumber
\end{eqnarray}
Let us denote by $[F_{n,0}^{\varrho},F_{n,1}^{\varrho},...,F_{n,n}^{\varrho};f]$ the coefficient of $e_n$ in $L_n^{\varrho}f$.
\begin{thm}
For each $f\in C[0,1]$ we have
\begin{eqnarray}\label{lagrange_eq3.2}
[F_{n,0}^{\varrho},F_{n,1}^{\varrho},...,F_{n,n}^{\varrho};f]=\dfrac{(n\varrho)^{\overline{n}}}{(n\varrho)^n}V\left(0,\frac{1}{n},...,\frac{n-1}{n},1\right)^{-1} \times\;\;\;\;\;\;\;\;\;\;\;\;\\
\;\;\;\;\;\;\;\;\;\;\;\;\;\;\;\;\;\;\;\;\;\times\left|
\begin{array}{cccccc}
1& 0 &0 &...& 0&F_{n,0}^{\varrho}(f)\\
1&\frac{1}{n}&(\frac{1}{n})^2&...&(\frac{1}{n})^{n-1}&F_{n,1}^{\varrho}(f)\\
...&....&...&...&...&...\\
1&\frac{n-1}{n}&(\frac{n-1}{n})^2&...&(\frac{n-1}{n})^{n-1}&F_{n,n-1}^{\varrho}(f)\\
1&1&1&...&1&F_{n,n}^{\varrho}(f)
\end{array}\nonumber
\right|.
\end{eqnarray}
\end{thm}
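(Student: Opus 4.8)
The plan is to read the coefficient of $e_n$ directly off the explicit determinantal representation (\ref{lagrange_eq2.2}) of $L_n^{\varrho}f$, and then to reshape the resulting determinant into the form (\ref{lagrange_eq3.2}). By definition $[F_{n,0}^{\varrho},F_{n,1}^{\varrho},\dots,F_{n,n}^{\varrho};f]$ is the coefficient of $e_n$ in $L_n^{\varrho}f$, so the first step is to expand the $(n+2)\times(n+2)$ determinant in (\ref{lagrange_eq2.2}) along its first row. The only entry of that row carrying $e_n$ is the last one, $\dfrac{(n\varrho)^{\overline n}}{(n\varrho)^n}\,e_n$; hence the sought coefficient equals $-V(0,\tfrac1n,\dots,1)^{-1}\cdot\dfrac{(n\varrho)^{\overline n}}{(n\varrho)^n}$ times the cofactor of that entry. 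This cofactor is $(-1)^{n+1}$ times the $(n+1)\times(n+1)$ minor $M$ obtained by deleting the first row and the last column of the big determinant; $M$ has first column $\bigl(F_{n,k}^{\varrho}(f)\bigr)_{k=0}^{n}$ and remaining columns $\bigl(\tfrac{(k\varrho)^{\overline m}}{(n\varrho)^m}\bigr)_{k=0}^{n}$ for $m=0,1,\dots,n-1$.

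Next I would bring $M$ into the layout of (\ref{lagrange_eq3.2}). Moving the $F$-column from the first to the last position costs $n$ transpositions of adjacent columns, hence a factor $(-1)^n$; combined with the $(-1)^{n+1}$ from the cofactor position and the leading minus sign of (\ref{lagrange_eq2.2}), the signs collapse to $(-1)^{1+(n+1)+n}=+1$. One is thus left with $\dfrac{(n\varrho)^{\overline n}}{(n\varrho)^n}\,V(0,\tfrac1n,\dots,1)^{-1}$ multiplied by the determinant whose first $n$ columns are $\bigl(\tfrac{(k\varrho)^{\overline m}}{(n\varrho)^m}\bigr)_{k=0}^{n}$, $m=0,\dots,n-1$, and whose last column is $\bigl(F_{n,k}^{\varrho}(f)\bigr)_{k=0}^{n}$. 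This already agrees with (\ref{lagrange_eq3.2}) except that the monomial columns $(k/n)^m$ appearing there are here replaced by the columns $\tfrac{(k\varrho)^{\overline m}}{(n\varrho)^m}$.

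The final step identifies these two determinants. For fixed $\varrho$ the map $j\mapsto (j\varrho)^{\overline m}=\prod_{\ell=0}^{m-1}(j\varrho+\ell)$ is a polynomial in $j$ of degree $m$ with leading coefficient $\varrho^m$, so $\dfrac{(j\varrho)^{\overline m}}{(n\varrho)^m}=\bigl(\tfrac jn\bigr)^m+\sum_{i<m}a_{m,i}\bigl(\tfrac jn\bigr)^i$ with constants $a_{m,i}$ independent of $j$. Consequently the $(n+1)\times n$ matrix $\bigl(\tfrac{(k\varrho)^{\overline m}}{(n\varrho)^m}\bigr)_{k,m}$ equals the matrix $\bigl((k/n)^m\bigr)_{k,m}$ right-multiplied by an $n\times n$ upper triangular matrix $T$ with unit diagonal; adjoining the common last column $\bigl(F_{n,k}^{\varrho}(f)\bigr)_{k}$ to both sides, the full $(n+1)\times(n+1)$ matrix factors as the one in (\ref{lagrange_eq3.2}) times the block matrix $\mathrm{diag}(T,1)$, whose determinant is $1$. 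Hence the column substitution leaves the determinant unchanged, and substituting back produces exactly the right-hand side of (\ref{lagrange_eq3.2}).

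The only delicate point is the bookkeeping: getting the sign right after the cofactor expansion and the column permutation, and stating precisely the unipotent triangular change of basis between $\{(j\varrho)^{\overline m}/(n\varrho)^m\}$ and $\{(j/n)^m\}$. No genuinely new difficulty arises, since this is the same elimination mechanism that was used to obtain (\ref{lagrange_eq2.2}) and (\ref{lagrange_eq3.1}).
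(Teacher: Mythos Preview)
Your argument is correct and follows essentially the same route as the paper: both extract the coefficient of $e_n$ from (\ref{lagrange_eq2.2}) by cofactor expansion along the first row, rearrange to put the $F_{n,k}^{\varrho}(f)$-column last, and then replace the rising-factorial columns $\bigl(\tfrac{(k\varrho)^{\overline m}}{(n\varrho)^m}\bigr)$ by the monomial columns $\bigl((k/n)^m\bigr)$ via a determinant-preserving column operation. The only cosmetic difference is that the paper first pulls the factors $(n\varrho)^m$ out of the columns, reduces $(k\varrho)^{\overline m}$ to $(k\varrho)^m$ by monic column operations, and then reabsorbs the factors, whereas you phrase the same reduction in one step as right-multiplication by a unipotent upper-triangular matrix; your sign bookkeeping is also made explicit where the paper just says ``immediately''.
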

\begin{proof}
From (\ref{lagrange_eq2.2}) we get immediately
\begin{eqnarray*}
\begin{array}{l}
[F_{n,0}^{\varrho},F_{n,1}^{\varrho},...,F_{n,n}^{\varrho};f]=\dfrac{(n\varrho)^{\overline{n}}}{(n\varrho)^n}V\left(0,\frac{1}{n},...,\frac{n-1}{n},1\right)^{-1}\times\;\;\;\;\;\;\;\;\;\;\;\;\\
\;\;\;\;\;\;\;\;\;\;\;\;\;\;\;\;\;\;\;\;\;\times\left|
\begin{array}{ccccc}
1& \dfrac{(0\varrho)^{\overline{1}}}{n\varrho} &...& \dfrac{(0\varrho)^{\overline{n-1}}}{(n\varrho)^{n-1}} &F_{n,0}^{\varrho}(f)\\
1& \dfrac{(1\varrho)^{\overline{1}}}{n\varrho} &...&\dfrac{(1\varrho)^{\overline{n-1}}}{(n\varrho)^{n-1}}&F_{n,1}^{\varrho}(f)\\
...&...&...&...&...\\
1& \dfrac{(n\varrho)^{\overline{1}}}{n\varrho} &...&\dfrac{(n\varrho)^{\overline{n-1}}}{(n\varrho)^{n-1}}&F_{n,n}^{\varrho}(f)
\end{array}
\right|=\vspace{0.3cm}\\
=\dfrac{(n\varrho)^{\overline{n}}/(n\varrho)^n}{(n\varrho)^{\frac{n(n-1)}{2}}V\left(0,\frac{1}{n},...,\frac{n-1}{n},1\right)}\left|
\begin{array}{ccccc}
1& (0\varrho)^{\overline{1}} &...& (0\varrho)^{\overline{n-1}} &F_{n,0}^{\varrho}(f)\\
1&(1\varrho)^{\overline{1}}&...&(1\varrho)^{\overline{n-1}}&F_{n,1}^{\varrho}(f)\\
...&...&...&...&...\\
1& (n\varrho)^{\overline{1}} &...&(n\varrho)^{\overline{n-1}}&F_{n,n}^{\varrho}(f)
\end{array}
\right|=
\vspace{0.3cm}\\
=\dfrac{(n\varrho)^{\overline{n}}/(n\varrho)^n}{(n\varrho)^{\frac{n(n-1)}{2}}V\left(0,\frac{1}{n},...,\frac{n-1}{n},1\right)}\left|
\begin{array}{ccccc}
1& 0&...& 0 &F_{n,0}^{\varrho}(f)\\
1&\varrho&...&\varrho^{n-1}&F_{n,1}^{\varrho}(f)\\
...&...&...&...&...\\
1& n\varrho &...&(n\varrho)^{n-1}&F_{n,n}^{\varrho}(f)
\end{array}
\right|
\end{array}
\end{eqnarray*}
and this leads to (\ref{lagrange_eq3.2}).
\end{proof}
\begin{rem}\label{Lagrange_rmk1}
From (\ref{lagrange_eq1.3}),  (\ref{lagrange_eq3.1}) and  (\ref{lagrange_eq3.2}) we derive
$$
\lim\limits_{\varrho\rightarrow\infty}[F_{n,0}^{\varrho},F_{n,1}^{\varrho},...,F_{n,n}^{\varrho};f]=\left[0,\frac{1}{n},...,\frac{n-1}{n},1;f\right]
$$
for all $f\in C[0,1]$.
Moreover, let $f\in C[0,1]$ and $\Phi_n$ a (Lagrange type) polynomial
with $\Phi_n\in\Pi_n, \Phi_n(\frac{j}{n})=F_{n,j}^{\varrho}(f), j=0,...,n$. From (\ref{lagrange_eq3.1}) and (\ref{lagrange_eq3.2}) it is easy to deduce
\begin{equation*}\label{lagrange_eq3.3}
[F_{n,0}^{\varrho},F_{n,1}^{\varrho},...,F_{n,n}^{\varrho};f]=\dfrac{(n\varrho)^{\overline{n}}}{(n\varrho)^{n}}\left[0,\frac{1}{n},...,\frac{n-1}{n},1;\Phi_n\right].
\end{equation*}
The last (classical) divided difference can be computed by recurrence; see \cite{Stancu-Agratini-Coman-Trambitas:2001}.
\end{rem}
\begin{rem}
Using (\ref{lagrange_eq1.7}), we see that
\begin{equation}\label{lagrange_eq3.4}
\mu_{\varrho,n}^{(n)}=[F_{n,0}^{\varrho},F_{n,1}^{\varrho},...,F_{n,n}^{\varrho};\cdot].
\end{equation}
For the Bernstein operator (i.e., for $\varrho\rightarrow\infty$), (\ref{lagrange_eq3.4}) can be found in \cite[p.164]{Cooper-Waldron:2000} .
\end{rem}

\begin{rem}\label{lagrange_rmk3}
$u_{n+1}^{\varrho}:=e_{n+1}-L_n^{\varrho}e_{n+1}$ is the unique monic polynomial in $\Pi_{n+1}$ such that $L_n^{\varrho}u_{n+1}^{\varrho}=0$. For example, $u_{n+1}^{\infty}=x(x-1)(x-\frac{1}{n})\cdot...\cdot(x-\frac{n-1}{n})$. Moreover, $u_{n+1}^{1}(x)=x(x-1)J_{n-1}(x)$, where $J_0(x), J_1(x),...$ are the monic Jacobi polynomials, orthogonal on $[0,1]$ with respect to the weight function $x(1-x)$. Indeed, $F_{n,0}^{1}(u_{n+1}^1)=F_{n,n}^{1}(u_{n+1}^1)=0$, and $\int\limits_0^1t^{k-1}(1-t)^{n-k-1}u_{n+1}^1(t)dt=\int\limits_0^1t^{k-1}(1-t)^{n-k-1}t(t-1)J_{n-1}(t)dt=0$ (since for all $k=1,...,n-1$, $t^{k-1}(1-t)^{n-k-1}$ is a polynomial of degree $n-2$). This implies $F_{n,k}^{1}(u_{n+1}^1)=0, k=1,...,n-1,$ and so $L_n^1(u_{n+1}^1)=0$.
\end{rem}
Now we shall prove a general result.
\begin{thm}\label{lagrange_th3}
The polynomial $u_{n+1}^{\varrho}$ has $n+1$ distinct roots in $[0,1]$.
\end{thm}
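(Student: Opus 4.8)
The plan is to turn the condition defining $u_{n+1}^{\varrho}$ into a system of orthogonality relations and then apply the classical ``many sign changes'' argument. By the interpolatory characterization behind \eqref{lagrange_eq1.2}, for $g\in\Pi_{n+1}$ one has $L_n^{\varrho}g=0$ if and only if $F_{n,k}^{\varrho}(g)=0$ for $k=0,1,\dots,n$, since $L_n^{\varrho}g$ is the unique element of $\Pi_n$ sharing with $g$ the values of all $F_{n,k}^{\varrho}$, and $0\in\Pi_n$. Applying this to $g=u_{n+1}^{\varrho}$ and using that the boundary functionals are point evaluations, $F_{n,0}^{\varrho}(g)=g(0)$ and $F_{n,n}^{\varrho}(g)=g(1)$, we get $u_{n+1}^{\varrho}(0)=u_{n+1}^{\varrho}(1)=0$; since $u_{n+1}^{\varrho}$ is monic of degree $n+1$ we may write $u_{n+1}^{\varrho}(x)=x(1-x)w(x)$ with $w\in\Pi_{n-1}$, $\deg w=n-1$ (in particular $w\not\equiv 0$). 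For $k=1,\dots,n-1$ the relation $F_{n,k}^{\varrho}(u_{n+1}^{\varrho})=0$, after absorbing the factor $t(1-t)$ into the Beta kernel, becomes $\int_0^1 t^{k\varrho}(1-t)^{(n-k)\varrho}\,w(t)\,dt=0$, all integrals converging because $k\varrho>0$ and $(n-k)\varrho>0$.

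Next I would linearise these $n-1$ conditions by the substitution $s=s(t):=\bigl(t/(1-t)\bigr)^{\varrho}$, which is a strictly increasing bijection of $(0,1)$ onto $(0,\infty)$. Factoring $t^{k\varrho}(1-t)^{(n-k)\varrho}=\bigl[t(1-t)\bigr]^{\varrho}(1-t)^{(n-2)\varrho}s(t)^{\,k-1}$, the change of variables turns the conditions into $\int_0^{\infty}s^{j}\,\widetilde W(s)\,ds=0$ for $j=0,1,\dots,n-2$, where $\widetilde W(s)=\bigl[t(1-t)\bigr]^{\varrho}(1-t)^{(n-2)\varrho}w\bigl(t(s)\bigr)\,\dfrac{dt}{ds}$. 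All factors in $\widetilde W$ except $w\bigl(t(s)\bigr)$ are strictly positive on $(0,\infty)$ (the Jacobian because $s$ is increasing), so $\widetilde W$ is continuous, not identically zero, and has exactly the same sign as $w\circ t$; equivalently, this says that $\{t^{k\varrho}(1-t)^{(n-k)\varrho}:k=1,\dots,n-1\}$ is an $(n-1)$-dimensional Chebyshev system on $(0,1)$ and $w$ is ``orthogonal'' to it.

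Now the standard argument applies: if $\widetilde W$ had at most $n-2$ sign changes on $(0,\infty)$, say at $s_1<\dots<s_m$ with $m\le n-2$, then the polynomial $P(s)=\pm\prod_{i=1}^{m}(s-s_i)$ of degree $m\le n-2$ keeps a constant sign relative to $\widetilde W$, so $\int_0^{\infty}\widetilde W(s)P(s)\,ds>0$; but this integral is a linear combination of the vanishing moments above, hence equals $0$, a contradiction. Therefore $\widetilde W$, and with it $w\circ t$, changes sign at least $n-1$ times; since $s(\cdot)$ is a monotone bijection, $w$ changes sign at least $n-1$ times in $(0,1)$. As $\deg w=n-1$, the polynomial $w$ then has exactly $n-1$ distinct simple zeros, all in $(0,1)$. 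Adjoining the zeros $0$ and $1$ contributed by the factor $x(1-x)$, we conclude that $u_{n+1}^{\varrho}=x(1-x)w(x)$ has $n+1$ distinct roots in $[0,1]$.

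The one delicate point I expect is the sign-change step: one must be sure that the transformed moment integrals $\int_0^{\infty}s^{j}\widetilde W(s)\,ds$ ($j=0,\dots,n-2$) converge, which is immediate since they coincide with the finite integrals $\int_0^1 t^{(j+1)\varrho}(1-t)^{(n-1-j)\varrho}w(t)\,dt$, and that the Chebyshev/positivity structure of $\widetilde W$ is used correctly; everything else is routine. An alternative starting point would be the similarity \eqref{lagrange_eq1.8}, writing $u_{n+1}^{\varrho}$ as a scalar multiple of $(\overline{\mathbb B}_{n\varrho})^{-1}u_{n+1}^{\infty}$, but controlling the roots under $(\overline{\mathbb B}_{n\varrho})^{-1}$ seems less transparent than the orthogonality argument, which moreover is uniform in $\varrho$.
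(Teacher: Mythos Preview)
Your argument is correct. The factorisation $u_{n+1}^{\varrho}(x)=x(1-x)w(x)$, the transformation of the interior conditions $F_{n,k}^{\varrho}(u_{n+1}^{\varrho})=0$ into $\int_0^1 t^{k\varrho}(1-t)^{(n-k)\varrho}w(t)\,dt=0$, the substitution $s=(t/(1-t))^{\varrho}$, and the sign-change count are all sound; the integrability you worry about follows since $|w|$ is bounded and the remaining positive weight has finite moments up to order $n-2$.

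The paper, however, proves this theorem by a different route: it invokes the similarity \eqref{lagrange_eq1.8} to obtain $L_n(\overline{\mathbb B}_{n\varrho}u_{n+1}^{\varrho})=0$, identifies $\overline{\mathbb B}_{n\varrho}u_{n+1}^{\varrho}$ with a scalar multiple of $u_{n+1}^{\infty}$ (which visibly has $n+1$ distinct roots in $[0,1]$), and then cites an external result from \cite{Kacso-Stanila:2013} asserting that if $\overline{\mathbb B}_{n\varrho}p$ has $n+1$ distinct roots in $[0,1]$ then so does $p$. So the paper pursues precisely the alternative you dismissed as ``less transparent'', delegating the root-control under $(\overline{\mathbb B}_{n\varrho})^{-1}$ to another paper. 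Your approach is self-contained and avoids that dependency; what you lose is the structural identity $\overline{\mathbb B}_{n\varrho}u_{n+1}^{\varrho}=\dfrac{(n\varrho)^{n+1}}{(n\varrho)^{\overline{n+1}}}\,u_{n+1}^{\infty}$, which the paper gets for free along the way.

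It is worth noting that your method is exactly the one the paper itself uses a little later, in the proof of Theorem~\ref{lagrange_th3.4}: the same substitution $s=(t/(1-t))^{\varrho}$ and the same moment/sign-change argument are applied there to $R_n^{\varrho}f$. Since $R_n^{\varrho}e_{n+1}=u_{n+1}^{\varrho}$, Theorem~\ref{lagrange_th3.4} with $f=e_{n+1}$ recovers the present statement; in effect you have anticipated that argument and specialised it here.
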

\begin{proof}
By using Remark \ref{lagrange_rmk3} and (\ref{lagrange_eq1.8}) we get $(\overline{\mathbb{B}}_{n\varrho}^{-1}\circ L_n\circ \overline{\mathbb{B}}_{n\varrho})(u_{n+1}^{\varrho})=0$,
which entails $L_n(\overline{\mathbb{B}}_{n\varrho}u_{n+1}^{\varrho})=0$. Now the same Remark \ref{lagrange_rmk3} yields
$$
\overline{\mathbb{B}}_{n\varrho}u_{n+1}^{\varrho}=\dfrac{(n\varrho)^{n+1}}{(n\varrho)^{\overline{n+1}}}u_{n+1}^{\infty}.
$$
So $\overline{\mathbb{B}}_{n\varrho}u_{n+1}^{\varrho}$ has $n+1$ distinct roots in $[0,1]$. According to \cite{Kacso-Stanila:2013}, $u_{n+1}^{\varrho}$ has at least $n+1$ distinct roots in $[0,1]$; to finish the proof, it suffices to remark that $u_{n+1}^{\varrho}$ is a polynomial of degree $n+1$.
\end{proof}
Now let us recall the representation of $L_n$ in terms of the fundamental Lagrange polynomials:
$$
L_nf(x)=\sum\limits_{k=0}^{n}l_{n,k}(x)f(\frac{k}{n}), f\in C[0,1], x\in [0,1].
$$
Using (\ref{lagrange_eq1.8}) we infer that $L_n^{\varrho}$ has a similar representation, namely
$$
L_n^{\varrho}f(x)=\sum\limits_{k=0}^{n}l_{n,k}^{\varrho}(x)F_{n,k}^{\varrho}(f),
$$
where
\begin{equation}\label{lagrange_eq3.5}
l_{n,k}^{\varrho}:=\overline{\mathbb{B}}_{n\varrho}^{-1}(l_{n,k}), k=0,1,...,n.
\end{equation}
\begin{thm}
For each $k=0,1,...,n$, the polynomial $l_{n,k}^{\varrho}$ has $n$ distinct roots in [0,1].
\end{thm}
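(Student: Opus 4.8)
The plan is to argue exactly as in the proof of Theorem~\ref{lagrange_th3}: push the (known) roots of a polynomial through the similarity relation (\ref{lagrange_eq3.5}) and invoke the root-counting result of \cite{Kacso-Stanila:2013}.

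First I would record that the Lupa\c s--M\"uhlbach operator $\overline{\mathbb{B}}_{n\varrho}$ is degree-preserving on polynomials. Indeed, from $\overline{\mathbb{B}}_{r}(e_m;x)=(rx)^{\overline m}/r^{\overline m}$ (the $m$-th moment of the Beta$(rx,r-rx)$ law, which is also implicit in Remark~\ref{lagrange_rmk3}), the coefficient of $x^m$ in $\overline{\mathbb{B}}_{n\varrho}e_m$ equals $(n\varrho)^m/(n\varrho)^{\overline m}\neq 0$; hence $\overline{\mathbb{B}}_{n\varrho}$, and therefore $\overline{\mathbb{B}}_{n\varrho}^{-1}$, map $\Pi_n$ bijectively onto $\Pi_n$ preserving the exact degree. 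Since the fundamental Lagrange polynomial $l_{n,k}$ has degree $n$, it follows from (\ref{lagrange_eq3.5}) that $l_{n,k}^{\varrho}=\overline{\mathbb{B}}_{n\varrho}^{-1}(l_{n,k})\in\Pi_n$ also has degree exactly $n$.

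Next, by (\ref{lagrange_eq3.5}) we have $\overline{\mathbb{B}}_{n\varrho}(l_{n,k}^{\varrho})=l_{n,k}$, and $l_{n,k}$ vanishes precisely at the $n$ nodes $\frac{j}{n}$, $j\in\{0,1,\dots,n\}\setminus\{k\}$, all of which lie in $[0,1]$. Thus $\overline{\mathbb{B}}_{n\varrho}(l_{n,k}^{\varrho})$ has exactly $n$ distinct roots in $[0,1]$. Applying the result of \cite{Kacso-Stanila:2013} in the form already used in the proof of Theorem~\ref{lagrange_th3} --- namely that a polynomial $p$ has at least as many distinct roots in $[0,1]$ as $\overline{\mathbb{B}}_{n\varrho}p$ --- we conclude that $l_{n,k}^{\varrho}$ has at least $n$ distinct roots in $[0,1]$. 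Since $\deg l_{n,k}^{\varrho}=n$, there cannot be more than $n$, so $l_{n,k}^{\varrho}$ has exactly $n$ distinct (simple) roots, all lying in $[0,1]$, which is the assertion.

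I do not expect a genuine obstacle: the argument is a direct parallel of Theorem~\ref{lagrange_th3}. The one point to state carefully is that the lemma from \cite{Kacso-Stanila:2013} counts roots in the \emph{closed} interval $[0,1]$, so that endpoint roots of $l_{n,k}$ are legitimately transferred --- which is precisely the version invoked earlier in the paper. Should one wish to avoid \cite{Kacso-Stanila:2013}, an alternative start is the biorthogonality $F_{n,j}^{\varrho}(l_{n,k}^{\varrho})=\delta_{jk}$ (obtained from the representation $L_n^{\varrho}f=\sum_k l_{n,k}^{\varrho}F_{n,k}^{\varrho}(f)$ together with (\ref{lagrange_eq1.2}) and the invertibility of $U_n^{\varrho}$ on $\Pi_n$), which already forces $l_{n,k}^{\varrho}(0)=0$ for $k\neq0$ and $l_{n,k}^{\varrho}(1)=0$ for $k\neq n$, the remaining interior roots being located by a sign-change/Rolle argument using the weighted integral conditions; but the first route is shorter and matches the paper's methodology.
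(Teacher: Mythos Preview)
Your proposal is correct and follows precisely the route the paper intends: it invokes $\overline{\mathbb{B}}_{n\varrho}(l_{n,k}^{\varrho})=l_{n,k}$ from (\ref{lagrange_eq3.5}), applies the root-transfer result of \cite{Kacso-Stanila:2013} as in Theorem~\ref{lagrange_th3}, and finishes by the degree count. The paper itself omits the proof, stating only that it is similar to that of Theorem~\ref{lagrange_th3}; your write-up is a faithful expansion of that sketch (with the degree-preservation remark and the alternative biorthogonality argument being extra, not needed).
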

\begin{proof}
Since, according to (\ref{lagrange_eq3.5}), $\overline{\mathbb{B}}_{n\varrho}(l_{n,k}^{\varrho})=l_{n,k}$, the proof is similar to that of Theorem \ref{lagrange_th3} and we omit it.
\end{proof}
In what follows we shall establish mean value theorems for the generalized divided difference and for the remainder $R_n^{\varrho}f:=f-L_n^{\varrho}$.
\begin{thm}\label{lagrange_th3.4}
Let $n\geq 1, \varrho>0$ and $f\in C[0,1]$ be given. Then there exist $0=t_0<t_1<....<t_n=1$ such that
\begin{equation}\label{lagrange_eq3.6}
R_n^{\varrho}f(t_i)=0, \; i=0,1,...,n.
\end{equation}
\end{thm}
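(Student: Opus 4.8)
Write first that the trivial case is disposed of, then reduce to a sign-change count, then run the Chebyshev-system (moment) argument. Here is the plan in detail.

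If $R_n^{\varrho}f\equiv 0$ the assertion is trivial, so assume $R_n^{\varrho}f=f-L_n^{\varrho}f\not\equiv 0$. Since $F_{n,0}^{\varrho}$ and $F_{n,n}^{\varrho}$ are the point evaluations at $0$ and $1$, the interpolatory property (\ref{lagrange_eq1.2}) gives $R_n^{\varrho}f(0)=R_n^{\varrho}f(1)=0$, so we may take $t_0=0$ and $t_n=1$ and it remains only to produce $n-1$ distinct zeros of $R_n^{\varrho}f$ in the open interval. For $k=1,\dots,n-1$, (\ref{lagrange_eq1.2}) reads
\[
\int_0^1 w_k(t)\,R_n^{\varrho}f(t)\,dt=0,\qquad k=1,\dots,n-1,
\]
where $w_k(t):=t^{k\varrho-1}(1-t)^{(n-k)\varrho-1}$ is strictly positive on $(0,1)$.

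Next I would record the algebraic fact that makes the argument work. Put $u(t):=\bigl(\tfrac{t}{1-t}\bigr)^{\varrho}$, a strictly increasing bijection of $(0,1)$ onto $(0,\infty)$. Comparing exponents one checks $w_1(t)\,u(t)^m=w_{m+1}(t)$ for $0\le m\le n-2$, hence $\operatorname{span}\{w_1,\dots,w_{n-1}\}=w_1\cdot\operatorname{span}\{1,u,\dots,u^{\,n-2}\}$. After the substitution $s=u(t)$, the family $\{1,u,\dots,u^{\,n-2}\}$ becomes the Chebyshev system $\{1,s,\dots,s^{\,n-2}\}$ on $(0,\infty)$; in particular, for any points $0<\xi_1<\dots<\xi_r<1$ with $r\le n-2$ the function $w_1(t)\prod_{i=1}^{r}\bigl(u(t)-u(\xi_i)\bigr)$ changes sign in $(0,1)$ exactly at $\xi_1,\dots,\xi_r$ and, upon expanding the product as a polynomial in $u$ and using $w_1u^m=w_{m+1}$, it is a nontrivial linear combination of $w_1,\dots,w_{r+1}$.

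The core step is the classical sign-change count. If $R_n^{\varrho}f$ has infinitely many sign changes in $(0,1)$ we are done at once; otherwise let $0<\xi_1<\dots<\xi_r<1$ be all of them and suppose, for contradiction, that $r\le n-2$. Take
\[
\phi(t)=\varepsilon\,w_1(t)\prod_{i=1}^{r}\bigl(u(t)-u(\xi_i)\bigr)\in\operatorname{span}\{w_1,\dots,w_{r+1}\}\subseteq\operatorname{span}\{w_1,\dots,w_{n-1}\},
\]
with $\varepsilon\in\{-1,+1\}$ chosen so that $\phi\cdot R_n^{\varrho}f\ge 0$ on $(0,1)$: this is possible because $\phi$ and $R_n^{\varrho}f$ are continuous with exactly the same finite set of sign changes, hence have constant weak sign on each of the same $r+1$ subintervals, so their product has a constant weak sign there. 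Writing $\phi=\varepsilon\sum_{m=0}^{r}a_m w_{m+1}$ with $m+1\le n-1$, the orthogonality relations above give $\int_0^1\phi\,R_n^{\varrho}f\,dt=0$; on the other hand $\phi\,R_n^{\varrho}f$ is continuous, $\ge 0$, and not identically $0$ (since $R_n^{\varrho}f\not\equiv 0$ while $\phi$ vanishes at only finitely many points), so $\int_0^1\phi\,R_n^{\varrho}f\,dt>0$ — a contradiction. Hence $R_n^{\varrho}f$ has at least $n-1$ sign changes in $(0,1)$; a continuous function vanishes at each of its sign changes, and distinct sign changes give distinct zeros, so we may pick $0<t_1<\dots<t_{n-1}<1$ with $R_n^{\varrho}f(t_i)=0$. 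Together with $t_0=0$, $t_n=1$ this is (\ref{lagrange_eq3.6}).

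I expect the main obstacle to be making the middle two paragraphs fully rigorous for arbitrary real $\varrho>0$: one must justify cleanly that a nonzero continuous function orthogonal to $w_1,\dots,w_{n-1}$ has at least $n-1$ sign changes, which rests on the Chebyshev-system property of $\{1,s,\dots,s^{\,n-2}\}$ and on a careful handling of "sign change" for functions that need not be polynomials (e.g. those vanishing on subintervals); everything else is routine bookkeeping. An alternative route would use the similarity (\ref{lagrange_eq1.8}) together with the known zero pattern of the classical remainder $f-L_n f$, but transporting sign information through $\overline{\mathbb{B}}_{n\varrho}^{-1}$, which does not preserve positivity, is less transparent, so I would stay with the moment argument above.
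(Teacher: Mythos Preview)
Your proposal is correct and follows essentially the same route as the paper: both use the interpolatory relations to get the endpoint zeros and the $n-1$ moment conditions, then exploit the key substitution $u(t)=(t/(1-t))^{\varrho}$ to reduce the weights $w_k$ to a polynomial Chebyshev system, and finish with the standard sign-change/contradiction argument. The only cosmetic differences are that the paper performs the change of variable $x=u(t)$ explicitly (moving the integrals to $(0,\infty)$ with weight $(1+x^{1/\varrho})^{-n\varrho}$ and working with ordinary polynomials in $x$), whereas you keep the integrals on $(0,1)$ and use polynomials in $u(t)$; and that you phrase the counting in terms of sign changes rather than roots, which is in fact the cleaner formulation for the positivity step.
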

\begin{proof}
According to (\ref{lagrange_eq1.2}), $F_{n,k}^{\varrho}(R_n^{\varrho}f)=0, k=0,1,...,n$, i.e.
\begin{equation}\label{lagrange_eq3.7}
R_n^{\varrho}f(0)=R_n^{\varrho}f(1)=0,
\end{equation}
\begin{eqnarray}
\int\limits_0^1t^{k\varrho-1}(1-t)^{(n-k)\varrho-1}R_n^{\varrho}f(t)dt=0, k=1,...,n-1.\label{lagrange_eq3.8}
\end{eqnarray}
Set $x:=\left(\dfrac{t}{1-t}\right)^{\varrho}, \;j:=k-1$, and $h(x):=R_n^{\varrho}f\left(\dfrac{x^{1/\varrho}}{1+x^{1/\varrho}}\right), x\geq 0$. Then (\ref{lagrange_eq3.8}) becomes
\begin{equation}\label{lagrange_eq3.9}
\int\limits_0^{\infty} (1+x^{1/\varrho})^{-n\varrho}x^j h(x)dx=0, \; j=0,1,...,n-2.
\end{equation}
Suppose that the number of the roots of $h$ in $(0,+\infty)$ is at most $n-2$, i.e. $\{x\in (0,+\infty):h(x)=0\}=\{x_1,...,x_r\}, r\leq n-2$. Then there exists a polynomial $p\in \Pi_{n-2}$ such that
$\{x\in (0,+\infty):p(x)=0\}\subset\{x_1,...,x_r\}$ and, moreover,
\begin{equation}\label{lagrange_eq3.10}
\int\limits_0^{\infty} (1+x^{1/\varrho})^{-n\varrho}p(x) h(x)dx>0.
\end{equation}
Obviously (\ref{lagrange_eq3.10}) contradicts (\ref{lagrange_eq3.9}), which means that $h$ has at least $n-1$ roots in $ (0,+\infty)$. It follows that $R_n^{\varrho}f$ has at least $n-1$ roots in $(0,1)$. Together with  (\ref{lagrange_eq3.7}), this proves the theorem.
\end{proof}
\begin{cor}\label{lagrange_cor3.5}
Let $n\geq 1, \varrho>0$ and $f\in C^n[0,1]$ be given. Then there exists $\xi\in(0,1)$ such that
$$
[F_{n,0}^{\varrho},F_{n,1}^{\varrho},...,F_{n,n}^{\varrho};f]=\dfrac{f^{(n)}(\xi)}{n!}.
$$
\end{cor}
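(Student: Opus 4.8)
The plan is to combine Theorem~\ref{lagrange_th3.4} with the classical Rolle-type argument that underlies the mean value theorem for divided differences. First I would invoke Theorem~\ref{lagrange_th3.4}: since $f\in C^n[0,1]\subset C[0,1]$, there exist points $0=t_0<t_1<\dots<t_n=1$ at which $R_n^{\varrho}f=f-L_n^{\varrho}f$ vanishes. Thus $R_n^{\varrho}f$ has $n+1$ distinct zeros in $[0,1]$.

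Next I would apply Rolle's theorem repeatedly. Because $L_n^{\varrho}f\in\Pi_n$ and $f\in C^n[0,1]$, the function $R_n^{\varrho}f$ belongs to $C^n[0,1]$, and it is in particular $n$ times differentiable on $(0,1)$ and continuous on $[0,1]$. Since it has $n+1$ distinct zeros in $[0,1]$, its derivative $(R_n^{\varrho}f)'$ has at least $n$ distinct zeros in $(0,1)$; iterating, $(R_n^{\varrho}f)^{(j)}$ has at least $n+1-j$ distinct zeros in $(0,1)$ for $j=1,\dots,n$. Taking $j=n$ gives a point $\xi\in(0,1)$ with $(R_n^{\varrho}f)^{(n)}(\xi)=0$.

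Finally I would identify the two contributions to this derivative. By definition, $[F_{n,0}^{\varrho},F_{n,1}^{\varrho},\dots,F_{n,n}^{\varrho};f]$ is the coefficient of $e_n$ in $L_n^{\varrho}f$, so $(L_n^{\varrho}f)^{(n)}$ is the constant $n!\,[F_{n,0}^{\varrho},F_{n,1}^{\varrho},\dots,F_{n,n}^{\varrho};f]$. Hence
$$
0=(R_n^{\varrho}f)^{(n)}(\xi)=f^{(n)}(\xi)-n!\,[F_{n,0}^{\varrho},F_{n,1}^{\varrho},\dots,F_{n,n}^{\varrho};f],
$$
which rearranges to the claimed identity.

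The argument is essentially routine; the only point requiring care is that the zeros furnished by Theorem~\ref{lagrange_th3.4} are genuinely \emph{distinct} (so that Rolle's theorem applies $n$ times and produces a zero of $(R_n^{\varrho}f)^{(n)}$ strictly inside $(0,1)$), and this is exactly the content of that theorem, so no extra obstacle arises.
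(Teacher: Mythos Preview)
Your argument is correct and follows exactly the route taken in the paper: invoke Theorem~\ref{lagrange_th3.4} to obtain $n+1$ distinct zeros of $R_n^{\varrho}f$, apply Rolle's theorem $n$ times to produce $\xi\in(0,1)$ with $(R_n^{\varrho}f)^{(n)}(\xi)=0$, and then read off the conclusion from $(L_n^{\varrho}f)^{(n)}=n!\,[F_{n,0}^{\varrho},\dots,F_{n,n}^{\varrho};f]$. The paper merely compresses the Rolle step into a single sentence, but the strategy is identical.
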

\begin{proof}
According to Theorem \ref{lagrange_th3.4}, $R_n^{\varrho}f$ has at least $n+1$ roots in $[0,1].$ It follows that $(R_n^{\varrho}f)^{(n)}$ has at least a root $\xi\in(0,1)$. Thus
$$
0=(R_n^{\varrho}f)^{(n)}(\xi)=f^{(n)}(\xi)-n![F_{n,0}^{\varrho},F_{n,1}^{\varrho},...,F_{n,n}^{\varrho};f],
$$
and the proof is finished.
\end{proof}
Let now $n\geq 1, \varrho>0$ and $f\in C^{n+1}[0,1]$ be given. Consider the points $t_0,t_1,...,t_n$ satisfying (\ref{lagrange_eq3.6}), and let $\omega(t)=(t-t_0)\cdot...\cdot(t-t_n)$.
\begin{cor}\label{lagrange_cor3.6}
Let $x\in [0,1]\setminus\{t_0,t_1,...,t_n\}$. Under the above assumption there exists $\eta_x\in(0,1)$ such that
\begin{equation*}\label{lagrange_eq3.11}
R_n^{\varrho}f(x)=\omega(x)\dfrac{f^{(n+1)}(\eta_x)}{(n+1)!}.
\end{equation*}
\end{cor}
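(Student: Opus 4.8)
The plan is to run the classical Rolle-type argument for the Lagrange remainder, now applied to the interpolation data furnished by Theorem \ref{lagrange_th3.4}. Fix $x\in[0,1]\setminus\{t_0,\dots,t_n\}$; since $t_0=0$ and $t_n=1$, necessarily $x\in(0,1)$. I would introduce the auxiliary function
\[
\varphi(t):=R_n^{\varrho}f(t)-\frac{R_n^{\varrho}f(x)}{\omega(x)}\,\omega(t),\qquad t\in[0,1],
\]
which is well defined because $\omega(x)\neq 0$. By (\ref{lagrange_eq3.6}) together with the choice of $x$, the function $\varphi$ vanishes at the $n+2$ distinct points $t_0,t_1,\dots,t_n,x$ of $[0,1]$.

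Next I would use the hypothesis $f\in C^{n+1}[0,1]$: since $L_n^{\varrho}f\in\Pi_n$ and $\omega\in\Pi_{n+1}$, the function $\varphi$ lies in $C^{n+1}[0,1]$. Applying Rolle's theorem to the $n+2$ zeros of $\varphi$ produces at least $n+1$ zeros of $\varphi'$ in $(0,1)$; iterating this $n+1$ times yields a point $\eta_x\in(0,1)$ with $\varphi^{(n+1)}(\eta_x)=0$.

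Finally I would compute $\varphi^{(n+1)}$ explicitly. Since $(L_n^{\varrho}f)^{(n+1)}\equiv 0$ we have $(R_n^{\varrho}f)^{(n+1)}=f^{(n+1)}$, and since $\omega$ is monic of degree $n+1$ we have $\omega^{(n+1)}\equiv (n+1)!$. Hence
\[
0=\varphi^{(n+1)}(\eta_x)=f^{(n+1)}(\eta_x)-\frac{R_n^{\varrho}f(x)}{\omega(x)}\,(n+1)!,
\]
and solving for $R_n^{\varrho}f(x)$ gives the asserted identity $R_n^{\varrho}f(x)=\omega(x)\,\dfrac{f^{(n+1)}(\eta_x)}{(n+1)!}$.

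There is no real obstacle here; the argument is the standard one. The only points that genuinely rely on the earlier development are that the nodes $t_0,\dots,t_n$ of Theorem \ref{lagrange_th3.4} are mutually distinct and contain the endpoints $0$ and $1$, so that the $n+2$ zeros of $\varphi$ are distinct and the iterated use of Rolle's theorem legitimately places $\eta_x$ in the open interval $(0,1)$, and that $L_n^{\varrho}f$ has degree at most $n$, which is built into its definition via (\ref{lagrange_eq1.1}).
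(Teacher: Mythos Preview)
Your proof is correct and follows essentially the same route as the paper: the paper uses the auxiliary function $w(t)=\omega(x)R_n^{\varrho}f(t)-\omega(t)R_n^{\varrho}f(x)$, which is just your $\varphi$ multiplied by the nonzero constant $\omega(x)$, and then applies the same iterated Rolle argument. Your write-up is in fact more detailed in justifying the smoothness of $\varphi$ and the computation of $\varphi^{(n+1)}$.
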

\begin{proof}
Consider the function $w(t)=\omega(x)R_n^{\varrho}f(t)-\omega(t)R_n^{\varrho}f(x), t\in[0,1]$. Then $x,t_0,...,t_n$ are roots of $w$, which means that there exists $\eta_x\in(0,1)$ such that $w^{(n+1)}(\eta_x)=0$. Now it suffices to remark that
$w^{(n+1)}(t)=\omega(x)f^{(n+1)}(t)-(n+1)!R_n^{\varrho}f(x)$.
\end{proof}
Corollaries \ref{lagrange_cor3.5} and \ref{lagrange_cor3.6} generalize the mean value theorems for the divided difference and the remainder in classical Lagrange interpolation; see \cite[Sect. 3.1]{Stancu-Agratini-Coman-Trambitas:2001},  \cite[Sect. 1.4]{Mastroianni-Milovanovic:2008}..
\section{Iterated Boolean sums of the operators $U_n^{\varrho}$}
For $M\geq 1$, let $$
\oplus^{M}U_{n}^{\varrho}=I-(I-U_{n}^{\varrho})^M.
$$ be the iterated Boolean sum of $U_n^{\varrho}$; here $I$ stands for the identity operator on $C[0,1]$. 
Iterated Boolean sums of the classical Bernstein operator and modifications thereof were investigated by numerous authors in the past, among them G. Mastroiani and M.R. Occorsio (see \cite{Mastroianni-Occorsio:1977},\cite{Mastroianni-Occorsio:1978}). Some historical information on this method which may be traced to I.P. Natanson can be found in \cite{Gonska-Zhou:1992}.
From a general result of H.J. Wenz \cite[Theorem 2]{Wenz:1997} it follows that
$\lim\limits_{M\rightarrow\infty}\oplus^{M}U_{n}^{\varrho}f=L_{n}^{\varrho}f, f\in C[0,1], n\geq 1$.
With the notation from the preceding sections, we can say  more, namely
\begin{thm}
Let $n\geq 2$ and $f\in C[0,1]$ be given. Then
\begin{equation}\label{lagrange_eq4.1}
\lim\limits_{M\rightarrow\infty}(1-\lambda_{\varrho,n}^{(n)})^{-M}(\oplus^{M}U_{n}^{\varrho}f-L_{n}^{\varrho}f)=-[F_{n,0}^{\varrho},F_{n,1}^{\varrho},...,F_{n,n}^{\varrho};f]p_{\varrho,n}^{(n)},
\end{equation}
uniformly on $[0,1]$.
\end{thm}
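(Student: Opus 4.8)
The plan is to use the eigenstructure to diagonalise $\oplus^{M}U_n^{\varrho}$ on the range of $U_n^{\varrho}$ and then extract the dominant term. First I would expand $\oplus^{M}U_n^{\varrho}=I-(I-U_n^{\varrho})^{M}$ by the binomial theorem, so that for every $f\in C[0,1]$
$$
\oplus^{M}U_n^{\varrho}f=\sum_{j=1}^{M}\binom{M}{j}(-1)^{j-1}(U_n^{\varrho})^{j}f .
$$
For $j\geq1$ the image $(U_n^{\varrho})^{j}f$ lies in $\Pi_n$, and from (\ref{lagrange_eq1.6}) together with the biorthogonality $\mu_{\varrho,m}^{(n)}(p_{\varrho,k}^{(n)})=\delta_{mk}$ one reads off $\mu_{\varrho,k}^{(n)}(U_n^{\varrho}g)=\lambda_{\varrho,k}^{(n)}\mu_{\varrho,k}^{(n)}(g)$ for every $g$; iterating this relation gives
$$
(U_n^{\varrho})^{j}f=\sum_{k=0}^{n}\big(\lambda_{\varrho,k}^{(n)}\big)^{j}\,p_{\varrho,k}^{(n)}\,\mu_{\varrho,k}^{(n)}(f),\qquad j\geq1 .
$$

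Substituting this into the binomial sum, interchanging the two finite summations, and using $\sum_{j=1}^{M}\binom{M}{j}(-1)^{j-1}\lambda^{j}=1-(1-\lambda)^{M}$, I obtain
$$
\oplus^{M}U_n^{\varrho}f=\sum_{k=0}^{n}\Big(1-\big(1-\lambda_{\varrho,k}^{(n)}\big)^{M}\Big)\,p_{\varrho,k}^{(n)}\,\mu_{\varrho,k}^{(n)}(f).
$$
Comparing with the representation (\ref{lagrange_eq1.7}) of $L_n^{\varrho}f$ yields the key identity
$$
\oplus^{M}U_n^{\varrho}f-L_n^{\varrho}f=-\sum_{k=0}^{n}\big(1-\lambda_{\varrho,k}^{(n)}\big)^{M}\,p_{\varrho,k}^{(n)}\,\mu_{\varrho,k}^{(n)}(f),
$$
in which the terms $k=0$ and $k=1$ drop out since $\lambda_{\varrho,0}^{(n)}=\lambda_{\varrho,1}^{(n)}=1$, so the sum runs effectively over $k=2,\dots,n$.

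It remains to divide by $(1-\lambda_{\varrho,n}^{(n)})^{M}$ and let $M\to\infty$. The strict chain $1>\lambda_{\varrho,2}^{(n)}>\cdots>\lambda_{\varrho,n}^{(n)}>0$ shows that $1-\lambda_{\varrho,n}^{(n)}$ is the largest among the numbers $1-\lambda_{\varrho,k}^{(n)}$, $2\leq k\leq n$, and lies strictly between $0$ and $1$; hence $\big((1-\lambda_{\varrho,k}^{(n)})/(1-\lambda_{\varrho,n}^{(n)})\big)^{M}\to0$ for $k<n$ and equals $1$ for $k=n$. Since each $p_{\varrho,k}^{(n)}$ is a fixed polynomial bounded on $[0,1]$, the expression is a finite sum of such polynomials multiplied by scalars tending to a limit, hence the convergence is uniform and
$$
\lim_{M\to\infty}(1-\lambda_{\varrho,n}^{(n)})^{-M}\big(\oplus^{M}U_n^{\varrho}f-L_n^{\varrho}f\big)=-\,p_{\varrho,n}^{(n)}\,\mu_{\varrho,n}^{(n)}(f).
$$
Invoking the identification $\mu_{\varrho,n}^{(n)}=[F_{n,0}^{\varrho},F_{n,1}^{\varrho},\dots,F_{n,n}^{\varrho};\cdot]$ from (\ref{lagrange_eq3.4}) then gives (\ref{lagrange_eq4.1}). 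The only steps requiring genuine care are the justification of the iteration formula for $(U_n^{\varrho})^{j}$, i.e.\ that the dual functionals act multiplicatively under $U_n^{\varrho}$, and the observation that $1-\lambda_{\varrho,n}^{(n)}$ strictly dominates the remaining relevant factors; this last point is precisely where the hypothesis $n\geq2$ together with the strict eigenvalue inequalities enters. Everything else is bookkeeping with finite sums.
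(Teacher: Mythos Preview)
Your proof is correct and follows essentially the same route as the paper: binomial expansion of $I-(I-U_n^{\varrho})^M$, the spectral representation (\ref{lagrange_eq1.6}) applied to the iterates $(U_n^{\varrho})^j$, comparison with (\ref{lagrange_eq1.7}), and extraction of the dominant eigenterm via the strict inequalities on the $\lambda_{\varrho,k}^{(n)}$ together with (\ref{lagrange_eq3.4}). Your version is slightly more explicit about justifying the iteration formula and the uniformity of convergence, and you observe that the $k=0,1$ terms vanish outright, but the argument is the same.
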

\begin{proof}
We have, according to (\ref{lagrange_eq1.6})
\begin{eqnarray*}
\oplus^{M}U_{n}^{\varrho}f&=&(I-(I-U_{n}^{\varrho})^M)f=\sum\limits_{i=1}^M(-1)^{i+1}{M \choose i}(U_{n}^{\varrho})^{i}f\\
&=&\sum\limits_{i=1}^M(-1)^{i+1}{M \choose i}\sum\limits_{k=0}^n(\lambda_{\varrho,k}^{(n)})^{i}p_{\varrho,k}^{(n)}\mu_{\varrho,k}^{(n)}(f)\\
&=&\sum\limits_{k=0}^np_{\varrho,k}^{(n)}\mu_{\varrho,k}^{(n)}(f)
\sum\limits_{i=1}^M(-1)^{i+1}{M \choose i}(\lambda_{\varrho,k}^{(n)})^{i}\\
&=& \sum\limits_{k=0}^np_{\varrho,k}^{(n)}\mu_{\varrho,k}^{(n)}(f)(1-(1-(\lambda_{\varrho,k}^{(n)})^M).
\end{eqnarray*}
Combined with (\ref{lagrange_eq1.7}) this yields
$$
\oplus^{M}U_{n}^{\varrho}f-L_{n}^{\varrho}f=-\sum\limits_{k=0}^np_{\varrho,k}^{(n)}\mu_{\varrho,k}^{(n)}(f)(1-\lambda_{\varrho,k}^{(n)})^M,
$$
i.e.\\ $(1-\lambda_{\varrho,n}^{(n)})^{-M}(\oplus^{M}U_{n}^{\varrho}f-L_{n}^{\varrho}f)=
-p_{\varrho,n}^{(n)}\mu_{\varrho,n}^{(n)}(f)-\sum\limits_{k=0}^{n-1}\mu_{\varrho,k}^{(n)}(f)\mu_{\varrho,k}^{(n)}(f)\left(\dfrac{1-\lambda_{\varrho,k}^{(n)}}{1-\lambda_{\varrho,n}^{(n)}}\right)^M$.
Since $0<\dfrac{1-\lambda_{\varrho,k}^{(n)}}{1-\lambda_{\varrho,n}^{(n)}}<1, k=0,...,n-1$, we get
$$
\lim\limits_{M\rightarrow\infty}(1-\lambda_{\varrho,n}^{(n)})^{-M}(\oplus^{M}U_{n}^{\varrho}f-L_{n}^{\varrho}f)=-\mu_{\varrho,n}^{(n)}(f)p_{\varrho,n}^{(n)}.
$$
To conclude the proof it suffices to use (\ref{lagrange_eq3.4}).
\end{proof}
\begin{rem}
For $\varrho\rightarrow\infty$, (\ref{lagrange_eq4.1}) was obtained in \cite[Th. 26.7]{Rasa-Vladislav:1999}.
\end{rem}
\section{The derivatives of $U_n^{\varrho}$}\label{sec_derivatives}
In this section we show that there is a natural relationship between the derivatives of the operator images and the divided differences $[...;\Phi_n]$ which we introduced in Remark \ref{Lagrange_rmk1}.
\begin{thm}
With the usual notation the following relationships hold:
\begin{itemize}
\item[(i)] $(U_{n}^{\varrho}(f;x))'=n\sum\limits_{k=0}^{n-1}p_{n-1,k}(x)\Delta^1 F_{n,k}^{\varrho}(f)=\sum\limits_{k=0}^{n-1}p_{n-1,k}(x)\left[\dfrac{k}{n}, \dfrac{k+1}{n};\Phi_n\right]$;
\item[(ii)] $(U_{n}^{\varrho}(f;x))^{(j)}=n(n-1)\cdot...\cdot(n-j+1)\sum\limits_{k=0}^{n-j}p_{n-j,k}(x)\Delta^j F_{n,k}^{\varrho}(f)$\\$=n(n-1)\cdot...\cdot(n-j+1)\sum\limits_{k=0}^{n-j}p_{n-j,k}(x)\dfrac{j!}{n^j}\left[\dfrac{k}{n}, ...,\dfrac{k+j}{n};\Phi_n\right]$;
\item[(iii)] $U_{n}^{\varrho}(f;x)=\sum\limits_{k=0}^{n}{n\choose k}\Delta^k F_{n,0}^{\varrho}(f)e_k(x)=\sum\limits_{k=0}^{n}{n\choose k}\dfrac{k!}{n^k}\left[0,\dfrac{1}{n}, ...,\dfrac{k}{n};\Phi_n\right]e_k(x)$;
\end{itemize}
where as before $\Phi_n\left(\frac{k}{n}\right)=F_{n,k}^{\varrho}(f)$. 
\end{thm}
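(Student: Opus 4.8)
The plan is to reduce everything to the classical identities for the Bernstein operator. Recall the well-known formulas for $B_n$: writing $B_n(g;x)=\sum_{k=0}^n p_{n,k}(x)g(k/n)$, one has
$$
(B_n(g;x))^{(j)}=\frac{n!}{(n-j)!}\sum_{k=0}^{n-j}p_{n-j,k}(x)\,\Delta^j g(k/n)
$$
and, expanding in the monomial basis,
$$
B_n(g;x)=\sum_{k=0}^n\binom{n}{k}\Delta^k g(0)\,e_k(x),
$$
where $\Delta$ denotes the forward difference with step $1/n$ applied to the grid values $g(0),g(1/n),\dots,g(1)$. These are standard (see, e.g., the references already cited for $B_n$), and they are exactly statements (i)--(iii) \emph{with} $g(k/n)$ in place of $F_{n,k}^{\varrho}(f)$.

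First I would observe that, by the very definition of $U_n^{\varrho}$, one has $U_n^{\varrho}(f;x)=\sum_{k=0}^n F_{n,k}^{\varrho}(f)\,p_{n,k}(x)$, which is formally $B_n$ applied to any function $g$ whose values at the nodes $k/n$ are $F_{n,k}^{\varrho}(f)$. The polynomial $\Phi_n\in\Pi_n$ introduced in Remark \ref{Lagrange_rmk1}, characterized by $\Phi_n(k/n)=F_{n,k}^{\varrho}(f)$, is precisely such a function, so
$$
U_n^{\varrho}(f;x)=B_n(\Phi_n;x).
$$
Then the first equality in each of (i), (ii), (iii) follows by substituting $g=\Phi_n$ into the three classical Bernstein formulas above, since $\Delta^j F_{n,k}^{\varrho}(f)=\Delta^j\Phi_n(k/n)$ by construction.

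For the second equality in each line I would use the relationship between forward differences on an equispaced grid and divided differences: for a grid of step $1/n$,
$$
\Delta^j \Phi_n\!\left(\frac{k}{n}\right)=\frac{j!}{n^j}\left[\frac{k}{n},\frac{k+1}{n},\dots,\frac{k+j}{n};\Phi_n\right],
$$
which is the classical conversion formula (the divided difference over $j+1$ equally spaced nodes of spacing $h$ equals $\Delta^j/(j!\,h^j)$ times the leading value). Inserting this into the already-established first equalities gives (i), (ii) with the factor $j!/n^j$, and (iii) with $k!/n^k$; in (i) the extra simplification is that $\left[\frac{k}{n},\frac{k+1}{n};\Phi_n\right]=n\,\Delta^1\Phi_n(k/n)$, so the leading $n$ cancels the $1/n$ and no factorial factor is visible.

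The only point requiring a little care — and the main (minor) obstacle — is justifying that $B_n$ acts on the polynomial $\Phi_n$ exactly as it would on an arbitrary continuous function sampled at the nodes: this is immediate because $B_n$ depends on its argument only through the grid values $k/n$, so replacing a generic $g$ by the interpolating polynomial $\Phi_n$ changes nothing in the formulas. One should also note that since $\Phi_n\in\Pi_n$, its divided differences of order $>n$ vanish, consistent with (iii) terminating at $k=n$; and $B_n$ reproduces polynomials of degree $\le n$ only in the sense that $B_n\Phi_n$ need not equal $\Phi_n$, which is harmless here since we only claim $U_n^{\varrho}f=B_n\Phi_n$, not $U_n^{\varrho}f=\Phi_n$. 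With these remarks the three assertions follow, and the proof is complete.
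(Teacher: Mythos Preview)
Your proof is correct. The route is close to the paper's but organized around a single unifying observation that the paper does not make explicit: you write $U_n^{\varrho}f=B_n\Phi_n$ and then import the three classical Bernstein identities wholesale, together with the standard conversion $\Delta^j\Phi_n(k/n)=\dfrac{j!}{n^j}\bigl[\tfrac{k}{n},\dots,\tfrac{k+j}{n};\Phi_n\bigr]$ on an equispaced grid. The paper instead (a) quotes the first equality in (ii) from \cite{Gonska-Paltanea:2010-1}, (b) proves the forward-difference/divided-difference identity by a short induction using the recurrence for divided differences rather than citing it as known, and (c) obtains (iii) by Taylor expansion of $U_n^{\varrho}f$ at $x=0$ and then evaluating (ii) at $x=0$. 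Your packaging via $B_n\Phi_n$ is tidier and makes the dependence on the classical Bernstein case transparent; the paper's version is slightly more self-contained because it actually proves the $\Delta^j\leftrightarrow[\cdot;\Phi_n]$ identity. Either way, the mathematical content is the same.
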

\begin{proof}
(i) The forward difference was defined in \cite[p. 792]{Gonska-Paltanea:2010-1} by:
$$
\Delta^jF_{n,k}^{\varrho}(f)=\sum\limits_{i=0}^{j}{j\choose i}(-1)^{i+j}
F_{n,k+i}^{\varrho}(f).
$$
Thus we have
$$\left[\dfrac{k}{n}, \dfrac{k+1}{n};\Phi_n\right]=\dfrac{\Phi_n(\frac{k+1}{n})-\Phi_n(\frac{k}{n})}{\frac{k+1}{n}-\frac{k}{n}}=n[F_{n,k+1}^{\varrho}(f)-F_{n,k}^{\varrho}(f)]=n\Delta^1F_{n,k}^{\varrho}(f);$$
(ii) The first equality can be found in  \cite[p. 792]{Gonska-Paltanea:2010-1}. It remains to show that $$\Delta^j F_{n,k}^{\varrho}(f)=\dfrac{j!}{n^j}\left[\dfrac{k}{n}, ...,\dfrac{k+j}{n};\Phi_n\right].$$ 
We have that
$\Delta^{j+1}F_{n,k}^{\varrho}(f)=\Delta(\Delta^{j}F_{n,k}^{\varrho}(f))=
\Delta^{j}F_{n,k+1}^{\varrho}(f)-\Delta^{j}F_{n,k}^{\varrho}(f).$
By using the recurrence formula for divided differences (see e.g. \cite[p.104]{Stancu-Agratini-Coman-Trambitas:2001}) we get:
\begin{eqnarray*}
\Delta^{j}F_{n,k+1}^{\varrho}(f)-\Delta^{j}F_{n,k}^{\varrho}(f)&=&\frac{j!}{n^j}\cdot\frac{j+1}{n}\cdot\frac{\left[\frac{k+1}{n}, ...,\frac{k+j+1}{n};\Phi_n\right]-\left[\frac{k}{n}, ...,\frac{k+j}{n};\Phi_n\right]}{\frac{k+j+1}{n}-\frac{k}{n}}\\
&=&\frac{(j+1)!}{n^{j+1}}\left[\frac{k}{n}, ...,\frac{k+j+1}{n};\Phi_n\right]=\Delta^{j+1}F_{n,k}^{\varrho}(f).
\end{eqnarray*}
(iii) We apply Taylor's formula to $U_n^{\varrho}$ of degree $n$
$$
U_n^{\varrho}(f;x)=\sum\limits_{j=0}^n\dfrac{(U_n^{\varrho}f)^{(j)}(0)}{j!}x^k
$$
and show that $(U_{n}^{\varrho}(f;x))^{(j)}=n(n-1)\cdot...\cdot(n-j+1)\Delta^j F_{n,0}^{\varrho}(f)$. To this end we take $x=0$ in (ii); because $p_{n-j,0}(0)=1$ and for all $k\geq 1, p_{n-j,k}(0)=0$, from $\sum\limits_{k=0}^{n-j}$ only the first term remains, which concludes the proof.  
\end{proof}
\begin{rem}
In the case $\varrho\rightarrow\infty$ we can find the analogues of the above relationships in \cite[p. 300-302]{Stancu-Agratini-Coman-Trambitas:2001}.
\end{rem}

\bibliographystyle{amsplain}

\begin{thebibliography}{n} 

\bibitem{Cooper-Waldron:2000} S. Cooper, S. Waldron: The eigenstructure of the Bernstein operator, \textit{J. Approx. Theory} {\bf 105} (2000), 133-165.

\bibitem{Davis:1975} P.J. Davis: {\it Interpolation and Approximation}, Dover, New York, 1975.

\bibitem{Gonska-Paltanea:2010-1}H. Gonska, R. P\u{a}lt\u{a}nea: Simultaneous approximation by a class of Bernstein-Durrmeyer operators preserving linear functions, \emph{Czechoslovak Math. J.} {\bf 60} (2010), 783-799.

\bibitem{Gonska-Rasa-Stanila:2013}H. Gonska, I. Ra\c sa, E.D. St\u anil\u a: The eigenstructure of operators linking the
Bernstein and the genuine Bernstein-Durrmeyer operators, {\it Mediterr. J. Math.} (2013), DOI: 10.1007/s00009-013-0347-0

\bibitem{Gonska-Zhou:1992}H. Gonska, X.-L. Zhou: Approximation theorems for the iterated Boolean sums of Bernstein operators, {\it J. Comput. Appl. Math.}, {\bf 53} (1994), 21-31.

\bibitem{Kacso-Stanila:2013}D. Kacs\'o, E.D. St\u anil\u a: On the class of operators $U_{n}^{\varrho}$ linking the Bernstein
and the genuine Bernstein-Durrmeyer operators, {\it  J. Appl. Funct. Anal.},  {\bf 9} (2014), 335-348.

\bibitem{Lupas:1972} A. Lupa\c{s}: {\it Die Folge der Betaoperatoren}, Ph.D. Thesis, Stuttgart: Universit\"at Stuttgart 1972.

\bibitem{Mastroianni-Milovanovic:2008} G. Mastroianni, G. V. Milovanovic,: {\it Interpolation Processes. Basic Theory and Applications}, Springer, 2008.

\bibitem{Mastroianni-Occorsio:1977} G. Mastroianni, M. R. Occorsio: Una generalizzazione dell'operatore di Bernstein, {\it Rend. Accad. Sci. Mat. Fis. Nat. Napoli}, Serie IV, {\bf 44} (1977),
151-169.

\bibitem{Mastroianni-Occorsio:1978} G. Mastroianni, M. R. Occorsio: 
Una generalizzazione dellÕoperatore di Stancu.  {\it Rend. Accad. Sci. Mat. Fis. Mat. Napoli}, Serie IV, {\bf 45} (1978), 495-511.

\bibitem{Muhlbach:1972} G. M\"uhlbach: Rekursionsformeln f\"ur die zentralen Momente der P\'olya und der Beta-Verteilung, \textit{Metrika} \textbf{19} (1972), 171-177.

\bibitem{Paltanea:2007} R. P\u alt\u anea: A class of Durrmeyer type operators preserving linear functions, \emph{ Ann. Tiberiu Popoviciu Sem. Funct. Equat. Approxim. Convex.} (Cluj-Napoca) {\bf 5} (2007), 109-117.

\bibitem{Popoviciu:1973} E. Popoviciu: {\it Teoreme de Medie din Analiza Matematic\u a \c si Legatura lor cu Teoria Interpol\u arii}, Editura Dacia, Cluj, 1972.

\bibitem{Rasa-Vladislav:1999}I. Ra\c{s}a, T. Vladislav: {\it Analiza Numeric\u a. Aproximare, problema lui Cauchy abstract\u a, proiectori Altomare}, Ed. Tehnica, Bucure\c sti 1999.

\bibitem{Stancu-Agratini-Coman-Trambitas:2001} D.D. Stancu, O. Agratini, Gh. Coman, R. Tr\^ ambi\c ta\c
s: {\it Analiz\u a Numeric\u a \c si Teoria Aproxim\u arii}, vol.
I, Cluj-Napoca: Presa Universitar\u a Clujean\u a 2001.

\bibitem{Wenz:1997} H.J. Wenz: On the limits of (linear combinations of) iterates of linear operators, {\it J. Approx. Theory} {\bf 89} (1997), 219-237.

\end{thebibliography}

\end{document}